\begin{document}

\title {Realizability Models Separating Various Fan Theorems}

\author{Robert S. Lubarsky\thanks{I would like to thank Wouter Stekelenburg, Thomas
Streicher, and Jaap van Oosten for useful discussion during the
development of this work.}\inst{1} \and Michael Rathjen\inst{2}}
\institute{Dept. of Mathematical Sciences \\ Florida Atlantic
University \\ Boca Raton, FL 33431, USA
\\ \email{Robert.Lubarsky@alum.mit.edu} \\ \and Dept. of Pure Mathematics \\
University of Leeds \\ Leeds LS2 9JT, England
\\ \email{rathjen@maths.leeds.ac.uk}} \maketitle
\begin{abstract}
We develop a realizability model in which the realizers are the
reals not just Turing computable in a fixed real but rather the
reals in a countable ideal of Turing degrees. This is then applied
to prove several separation results involving variants of the Fan
Theorem.
\\ {\bf Keywords:} realizability, Kripke models, Fan Theorem, Weak K\"onig's Lemma, Weak Weak K\"onig's Lemma
\\{\bf AMS 2010 MSC:} 03F50, 03F60, 03D80, 03C90
\end{abstract}

\section{Introduction}

Certain constructions in computability theory lend themselves well
to realizability, the latter being based on an abstract notion of
computation. A coarse example of this is the notion of a Turing
computable function itself, as the collection of Turing machines
makes an applicative structure and so provides an example of
realizability. This model is closely tied to Turing computation,
naturally enough, and so provides finer examples. Consider Weak
K\"onig's Lemma, WKL, which is among constructivists more commonly
studied in its contrapositive form, the Fan Theorem FAN. (For
background on realizability, the Fan Theorem, and constructive
mathematics in general, there are any of a number of standard
texts, such as \cite {Bees,TvD,vO}.) Kleene's well-known eponymous
tree is a computable, infinite tree of binary sequences with no
computable path. In the context of reverse mathematics, this shows
that RCA$_0$ does not prove WKL. Within the realizability model,
the same example shows IZF does not prove FAN.

Perhaps a word should be said on the choice of the ground theory.
For the classical theory, it's RCA$_0$, while for the
constructive, it's IZF. The former is notably weak, the latter
strong. Why is that? And why those theories in particular? This is
not the place to discuss the particular choice of RCA$_0$. As for
IZF, its use is of secondary importance. The point is that much of
reverse classical mathematics is to show the equivalence of
various principles, for which a weaker base theory provides a
stronger theorem. Even for independence results, which on the
surface would be better over a stronger base, are often of the
form that a weaker theory does not imply a stronger one, where the
latter easily implies the former; clearly here, the base theory is
the weaker of the two. Also for those cases where the independence
result desired is an incomparability, the principles in question
are all weak set existence principles, weak in the sense that they
use a tiny fragment of ZF, and hence a weak base theory is needed,
to keep the theories in question from being outright provable. In
contrast, reverse constructive mathematics studies not set
existence principles, but rather logical principles. Instead of
fragments of ZF, the subject is fragments of Excluded Middle.
Especially when discussing independence, when a stronger base
theory gives a stronger theorem, in order to highlight that it
really is the logic that is up for grabs, and not set existence,
strong principles of set theory are taken as the base theory. IZF
is used here, since it is the simplest and most common
constructive correlate to ZF, the classical standard. Even for
equivalence theorems, there would still be a tendency to work over
IZF, since the degree to which a result depended on the IZF axioms
is the degree to which the result is ultimately classical. In
practice, if any theorem needs less than full IZF, what is
actually used could be read off from the proof anyway.

Returning to realizability, the picture is not quite so rosy when
it comes to other constructions. A case in point is the
distinction between WKL and WWKL, Weak Weak K\"onig's Lemma. WWKL
states that for every binary tree (of finite, 0-1 sequences) with
no path there is a natural number $n$ such that at least half the
sequences of length $n$ are not in the tree. This principle has
been studied in reverse mathematics, both classical \cite{SS} and
constructive \cite{N}. Yu and Simpson \cite{YS} showed that WWKL
does not imply WKL (over RCA$_0$). That's not so simple as merely
taking the computable sets; while that would falsify WKL, as
discussed above, it invalidates WWKL too. What they do is to
extend the computable sets by a carefully selected real $R_0$
(implicitly closing under Turing reducibility) which provides a
path through all the ``bad" trees while not destroying the Kleene
tree counter-example. That's not enough, though, because the
construction of a counter-example to WWKL relativizes. So while
$R_0$ kills off the bad computable trees, it introduces new bad
trees of its own. Hence the construction must be iterated: $R_1,
R_2, ...$ In the end, the union of the (reals computable in the)
$R_n$'s suffices.

The statement of WWKL carries over just fine to a constructive
setting, where we will call it the Weak Fan Theorem W-FAN, as well
as the question of whether W-FAN implies FAN. For better or worse,
the construction though doesn't. One might first think to use the
Yu-Simpson set of reals as the set of realizers. An immediate
problem is that we need an applicative structure: the realizers
need to act on themselves. It is immediate and routine to view
these reals as functions from the naturals to the naturals --
that's a trivial identification these days. It doesn't help
though. If the realizers are those functions, well, those
functions act on naturals, not on functions. What we would need
would be integer codes for those functions. The realizers from
this point of view would be that set of naturals, which as need be
could be taken to be functions. But here's the problem: what code
do you give a function which showed up in some increasing tower?
If you were looking at those functions computable in some fixed
oracle, you could consider all the naturals as each coding such a
machine. If instead your oracle is continually changing, it's not
clear what to do and still maintain an applicative structure. The
fixes we tried did not work, as we were warned.

The same issue comes up with another distinction around FAN,
namely the distinction between FAN and WKL. In order to show their
inequivalence, one might want to come up with a model of FAN
falsifying WKL. This has already been done using K$_2$
realizability, Kleene's notion of functional realizability. It is
another matter to prove this theorem via K$_1$ realizability. The
problem is as above: every Turing degree has an infinite binary
tree with no path of the same or lesser degree (the Kleene tree
relativized to that degree). One could take a set of degrees, any
of the trees of which have a path in some other degree. The
problem here is how to turn that into a realizability structure.

The goal of this paper is a realizability model in which the
realizers code functions from the natural numbers to themselves
with no highest Turing degree among them. As corollaries to this
method we get the two results just cited.

\section {The Main Construction}

The main idea here is to build a Kripke model, and then within
that a realizability model, which has sometimes been called
relative realizability \cite {vO}. This kind of construction was
apparently first suggested by de Jongh \cite{dJ}. Variants have
been used by several people: having the Kripke partial order
consist of only two points and the realizers be at $\bot$ certain
computable objects which are then injected into a full set of
realizers \cite{AB,B}, or using instead of Kripke semantics either
double-negation \cite{G} or a kind of Beth \cite{vO'} semantics.
For more detail on all of this, see the last two sections of
\cite{vO}.

To help keep things simple, we assume ZFC in the meta-theory. For
most of this work, neither classical logic nor the Axiom of Choice
is necessary, but we will not be careful about this.

Let the underlying Kripke partial order be $\omega^{< \omega}$.
Let M be the full Kripke model built on that p.o. Intuitively,
that means throw in all possible sets. More formally, a set in the
model is a function $f$ from $\omega^{< \omega}$ to the sets of
the model (inductively) which is non-decreasing (i.e. if $\sigma
\subseteq \tau$ then $f(\sigma) \subseteq f(\tau)$). Equality and
membership are defined by a mutual induction. On general
principles, M $\models$ IZF. Moreover, the ground model V has a
canonical image in M: given $x \in V$, let $\check{x}$ be such
that $\check{x}(\sigma) = \{\check{y} \mid y \in x\}$. We often
identify $x$ with $\check{x}$, the context hopefully making clear
whether we're in V or in M. For slightly more detail on the full
model, defined over any partial order, see for instance \cite{L}.

Within M, we will identify a special set $R$ of natural numbers,
based on a prior sequence $R_n$ of reals ($n \in \omega$). We
assume the $R_n$ are of strictly increasing Turing degree. At node
$\bot = \langle \rangle$, $R$ looks empty: $\bot \not \models s
\in R$; equivalently, $R(\langle \rangle) = \emptyset$. Suppose
inductively $R(\sigma)$ is defined, where $\sigma$ has length $n$.
Let $R_n^i$ list all the reals that differ from $R_n$ in finitely
many places. Let $R(\sigma^\frown i)$ be $R(\sigma) \cup \{
\langle n, s \rangle \mid s \in R_n^i \}$. In words, at level
$n+1$, beneath each node on level $n$, put into the $n^{th}$ slice
of $R$ all of the finite variations of $R_n$, spread out among all
the successors. So $R$ is a kind of join of the $R_n$'s, just not
all at once.

Because $R$ is (in M) a real, it makes sense to use $R$ as an
oracle for Turing computation. At $\bot$, if a computation makes
any query $s \in R$ of $R$, there are some nodes at which $s$ is
in $R$ and others where it is not, so the oracle cannot answer and
the computation cannot continue. This follows from the formal
model of oracle computability: a run of an oracle machine is a
tuple of natural numbers coding a correct computation; the rule
for extending a tuple when the last entry is an oracle call is
that the next entry must contain the right answer; if there is, at
a node, no right answer, then there can be no extending tuple.
Hence the only convergent computations are those that make no
oracle calls, and the only $R$-computable functions are the
computable ones. More generally, at node $\sigma$ of length $n$,
any query of the form $\langle k, s \rangle \in R$ with $k \geq n$
will be true at some future nodes and false at others, hence
unanswerable at $\sigma$. The computable functions at $\sigma$ are
those computable in $R_{n-1}$.

In M, let App be the applicative structure of the indices of
functions computable in $R$ (using, of course, the standard way of
turning such indices into an applicative structure). In M, let
M[App] be the induced realizability model. On general principles,
M[App] $\models$ IZF. The natural numbers of M[App] can be
identified with those of M, so any set of such in M[App] can be
identified with one in M. Furthermore, at any node, a decidable
real in one structure corresponds to a decidable real in the
other, and that can be identified with a real in the ambient
classical universe. Henceforth these various reals will not even
be distinguished notationally. For instance, if $\sigma \models
``M[App] \models ``T \subseteq \omega$ is decidable" " then we
might refer to the real $T$ in V.

For notational convenience, we will abbreviate $\sigma \models
``M[App] \models \phi"$ as $\sigma \models_{App} \phi.$

\begin {lemma}
For $\sigma$ of length $n$ and $R$ a real, $\sigma \models_{App}
``X$ is decidable" iff $X$ is Turing computable in $R_{n-1}$.
\end {lemma}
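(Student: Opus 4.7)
The plan is to prove both directions by unpacking what a realizer for decidability means in the Kripke-realizability model $M[\mathrm{App}]$ at a node $\sigma$ of length $n$. Recall that $X$ being decidable is realized by an index $e$ of an $R$-computable function such that $e \cdot m$ returns $0$ or $1$ correctly for membership in $X$ at every $m \in \omega$; by the Kripke semantics, such a realizer witnesses the statement at $\sigma$ iff the witnessed computation succeeds with the correct value at every extension $\tau \supseteq \sigma$. The key observation already noted in the excerpt is that at a node $\sigma$ of length $n$, an oracle query $\langle k,s\rangle \in R$ is settled exactly when $k<n$, and in that case the answer is determined by the finite variation $R_k^{i_k}$ of $R_k$ dictated by $\sigma(k)$; queries with $k\ge n$ will be true at some extensions and false at others and therefore never converge at $\sigma$.

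For the direction ($\Leftarrow$), suppose $X \le_T R_{n-1}$. Since the $R_k$ are of strictly increasing Turing degree, $R_0 \oplus \cdots \oplus R_{n-1} \equiv_T R_{n-1}$, and each $R_k$ is $\equiv_T R_k^{i_k}$ (finite variations). Hence an actual Turing machine computing $\chi_X$ from $R_{n-1}$ can, via a fixed uniform modification, be made to pose its oracle queries in the form ``$\langle k,s\rangle \in R$'' for appropriate $k<n$. At $\sigma$ this machine has an index $e$ which is in $\mathrm{App}$, and at every extension $\tau \supseteq \sigma$ the relevant initial segments of $R$ are preserved, so $e$ continues to compute $\chi_X$; thus $e$ realizes decidability of $X$ at $\sigma$.

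For ($\Rightarrow$), suppose $\sigma \models_{\mathrm{App}} ``X$ is decidable'', witnessed by an index $e$. At $\sigma$, the computation $e \cdot m$ must converge (to the correct bit) for every $m$; by the observation above, only oracle queries of the form $\langle k,s\rangle \in R$ with $k<n$ can be answered, so the entire computation is carried out relative to the join $R_0^{i_0} \oplus \cdots \oplus R_{n-1}^{i_{n-1}}$, which is Turing equivalent to $R_{n-1}$. Therefore $\chi_X$ is computable from $R_{n-1}$, as desired.

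The main obstacle is keeping the internal and external viewpoints straight: the realizer $e$ is, from the perspective of $M$, an index of an ``$R$-computable'' partial function, but the effective content at each node comes from the finite information about $R$ already forced there. The careful step is verifying that the Kripke-forcing requirement ``the computation works at every $\tau \supseteq \sigma$'' is automatically satisfied for a classical $R_{n-1}$-computation, since extending $\sigma$ only adds information about $R_k$ for $k \ge n$, which the fixed machine never queries.
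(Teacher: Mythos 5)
Your proof is correct and follows the same approach as the paper's: identify a realizer for decidability as an $R$-computable decision procedure, observe that at a node $\sigma$ of length $n$ only oracle queries $\langle k,s\rangle$ with $k<n$ are settled (so the realizer effectively computes relative to $R_{n-1}$, up to finite variation and join), and note the converse by hard-coding the finite-variation offsets into the index. The paper's proof is essentially the same argument stated more tersely, dismissing the converse as immediate; you supply the encoding and persistence details the paper omits, and in doing so you also avoid a small slip in the paper's last line, which says ``computable in $R_n$'' where $R_{n-1}$ is meant.
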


\begin {proof}
The statement ``$X$ is decidable" is $\forall m \in \omega \; m
\in X \vee m \not \in X.$ A realizer $r$ of the latter would be a
function that, on input $m$, decides whether $m$ is in or out of
$X$. If in the course of its computation $r$ asked the oracle any
question of the form $\langle k,s \rangle$ with $k \geq n$ then
the computation would not terminate at $\sigma$. So $r$ can access
only $R_{n-1}$, making $X$ computable in $R_n$. The converse is
immediate.
\end {proof}

\begin {lemma}
If $\sigma \models_{App} ``X$ is an infinite branch through the
binary tree" then $\sigma \models_{App} ``X$ is decidable."
\end {lemma}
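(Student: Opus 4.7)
The plan is to extract from a realizer of ``$X$ is an infinite branch'' a realizer of ``$X$ is decidable'' that is effective in the given data, and then invoke the previous lemma. Working at a node $\sigma$ of length $\ell$, the argument in the previous lemma shows that any realizer computation at $\sigma$ can only make oracle queries $\langle k,t\rangle\in R$ with $k<\ell$, so has access to nothing beyond $R_{\ell-1}$. Thus any realizer I manufacture uniformly from the given realizer $r$ of the hypothesis will automatically be Turing computable in $R_{\ell-1}$, and the conclusion ``$X$ is decidable'' will then follow from the previous lemma.

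First I would unpack ``$X$ is an infinite branch through the binary tree'' into its conjuncts: $X$ contains only codes of finite binary sequences; $X$ is linearly ordered by the initial-segment relation $\subseteq$; and for every $n\in\omega$ there exists $s\in X$ with $|s|=n$. From the realizer of the last conjunct I extract, uniformly in $n$, a concrete binary sequence $s_n$ of length $n$ together with a realizer $e_n$ of $s_n\in X$. To decide an input $m$: if $m$ does not code a finite binary sequence, the first conjunct supplies a realizer of $m\notin X$; otherwise, letting $s$ be the sequence coded by $m$ with $|s|=k$, compute $s_k$ and decide whether $s=s_k$ (a trivially decidable question about codes). If $s=s_k$, output the pair signalling ``in'' together with $e_k$; if $s\neq s_k$, output ``out'' together with a realizer of $m\notin X$ constructed from the linear-order realizer.

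The main obstacle is precisely this last subcase. Here the plan is: given any hypothetical realizer $p$ of $s\in X$, apply the realizer of the linear-order conjunct to $p$ and $e_k$ to obtain a realizer of $s\subseteq s_k\vee s_k\subseteq s$; since $|s|=|s_k|=k$, either disjunct forces $s=s_k$, which contradicts the decidable meta-theoretic fact $s\neq s_k$ and so yields a realizer of $\bot$. Packaging this as a function of $p$ gives the required realizer of $m\in X\to\bot$. The entire decision procedure is a fixed Turing functional in $r$ and the oracle, hence $R_{\ell-1}$-computable, so the previous lemma delivers $\sigma\models_{App}\text{``}X\text{ is decidable''}$.
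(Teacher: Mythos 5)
Your proposal is correct and takes essentially the same approach as the paper: extract from the given realizer, for each $n$, the unique node of length $n$ in $X$, and decide membership of any $s$ by comparing it with the extracted node of that length. The paper's proof is a two-sentence sketch of precisely this idea; you have simply made explicit the mechanics of the ``out'' case (using linearity plus uniqueness to refute any putative realizer of $s\in X$) and the bookkeeping that the resulting functional only queries $R_{\ell-1}$, neither of which changes the underlying argument.
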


\begin {proof}
To be an infinite branch means for every natural number $m$ there
is a unique node of length $m$. The realizer that $X$ is an
infinite branch has to produce that node given $m$.
\end {proof}

Often people are concerned about the use of various choice
principles. The independence results presented here are that much
stronger because Dependent Choice holds in our models.

\begin {proposition}
$\langle \rangle \models_{App}$ DC
\end {proposition}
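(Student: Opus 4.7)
The plan is to carry out the standard realizability proof of DC inside the combined model, checking that everything stays at the bottom of the Kripke structure. By the first lemma, realizers at $\langle \rangle$ are precisely indices of plain (oracle-free) computable partial functions; this pca admits Kleene's recursion theorem, which is the key tool.

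I would construct a single computable index $d$ with the property that, whenever $r$ is a realizer at $\langle \rangle$ of $\forall x \in X\, \exists y \in X\, R(x,y)$ and $s$ a realizer of $x_0 \in X$, the value $d \cdot r \cdot s$ realizes $\exists f : \omega \to X\, [f(0) = x_0 \wedge \forall n\, R(f(n), f(n+1))]$. Writing $p_0, p_1$ for the standard decomposition of an existential realizer into its witness and the accompanying proof-realizer, I use the recursion theorem to define computable $F, G$ with $F(0) = s$, $F(n+1) = p_0(r \cdot F(n))$, and $G(n) = p_1(r \cdot F(n))$. Then $F$ codes the desired sequence (each $F(n)$ being a membership realizer for the $n$-th element of $X$), $G$ codes the realizer of $\forall n\, R(f(n), f(n+1))$, and pairing these with $s$ (which handles $f(0) = x_0$) yields $d \cdot r \cdot s$.

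The main obstacle I anticipate is verifying that this $\langle \rangle$-level construction really realizes DC uniformly at every $\sigma \geq \langle \rangle$. Since $d$ is a fixed computable index, it belongs to App at every node --- the underlying set of indices is fixed, only the partial application function grows --- and persistence of Kripke forcing ensures that $r$ and $s$ remain realizers of their hypotheses at each $\sigma \geq \langle \rangle$. Because $d$ runs a purely computable algorithm on $r$ and $s$ with no oracle queries, its evaluation and output are identical at every node, so $d \cdot r \cdot s$ remains a valid realizer throughout. One must still check that each $F(n)$ realizes membership in $X$ rather than the outer existential (this is immediate by induction from the clause for $\exists y \in X$) and that the growth of $X$ at higher nodes causes no trouble, which it does not, since DC demands only \emph{some} witness sequence and our $d$ produces one uniformly.
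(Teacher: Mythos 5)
Your proposal is correct and is essentially what the paper has in mind: the paper's proof of this proposition is the single sentence that the standard proof of DC in Kleene K$_1$ realizability carries over, and you have simply written out that standard recursion-theorem argument and addressed its transfer to the Kripke setting. One piece of the transfer argument should be tightened, though. It is not true that ``the evaluation and output of $d\cdot r\cdot s$ are identical at every node'': when $d$ runs, it repeatedly applies $r$, and $r$ may make oracle queries at nodes $\sigma > \langle\rangle$, so the computation is genuinely node-dependent. Likewise, persistence from $\langle\rangle$ does not cover the relevant case, since the unfolding of ``$d \Vdash_r \mathrm{DC}$'' at $\langle\rangle$ quantifies over all $r,s$ realizing the hypotheses at all $\sigma \geq \langle\rangle$, including $r,s$ that are realizers only at some strictly higher node. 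The correct reason the fixed index $d$ works uniformly is that the s-m-n and recursion theorems for Kleene application are uniform in the oracle: the same $d$ performs the scaffolding relative to whatever fragment of $R$ is available at $\sigma$, and so $d$ realizes the DC clause inside $\mathrm{App}$ at every node; the rest of your argument then goes through as stated.
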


\begin {proof}
The same proof that DC holds in standard Kleene K$_1$
realizability works here.
\end {proof}

\section {D-FAN and W-D-FAN}

When adapting the classical results to the current setting, we
need an additional stipulation. All of the trees, and hence
principles, we consider will be decidable: for all binary
sequences $b$ and trees $T$, either $b \in T$ or $b \not \in T$.
So, for instance, instead of the full Fan Theorem FAN, we will be
considering the Decidable Fan Theorem: if a decidable tree in
\{0,1\}$^\mathbb{N}$ has no infinite path, then the tree is
finite. Also, Weak FAN, also known as WWKL, when applied to
decidable trees, would read: if a decidable tree in
\{0,1\}$^\mathbb{N}$ has no infinite path, then there is an $n$
such that at least half of \{0,1\}$^n$ is not in the tree.

This brings us to an annoying point about notation. Decidable FAN
has been referred to in various places as D-FAN, FAN$_D$,
$\Delta$-FAN, and FAN$_\Delta$. So notation for Weak Decidable FAN
could be any of those, with a ``W" stuck in somewhere. To make
matters worse, even though the statement of Weak FAN is a
weakening of FAN and not of WKL, the name WWKL for it is already
established in the classical literature, and so one could make a
case to stick with it, and insert decidability (``D" or
``$\Delta$") somewhere in there. These same considerations apply
to other variants of FAN, whether already identified (c-FAN,
$\Pi^0_1$-FAN) or not. Whatever we do here will likely not settle
the matter. Still, we have to choose something. It strikes us as
confusing to distinguish between FAN and WKL, and then call a
variant of FAN by a variant of WKL. Also, what if somebody someday
wants to study the contrapositive of ``WWKL"? Hence we stick with
the name W-FAN. As for how to get in the decidability part, we
choose the option that's the easiest to type: W-D-FAN.

Returning to the matter at hand, Yu-Simpson \cite {YS} construct a
sequence $X^n$ of reals of increasing Turing degree such that:

$i)$ if $T$ is a tree computable in $X^n$ the branches through
which form a set of positive measure, then a path through $T$ is
computable in $X^{n+1}$, and

$ii)$ no path through the Kleene tree is computable in any $X^n$.

We apply the construction from the previous section, with $R_n$
set to $X^n$. From this, the following lemmas are pretty much
immediate.

\begin {lemma}
$\langle \rangle \models_{App}$ W-D-FAN
\end {lemma}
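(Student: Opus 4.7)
The plan is to realize W-D-FAN uniformly by a bounded search through the levels of any given decidable tree, using Lemma 1 to pin down its Turing complexity at each node and Yu-Simpson's property (i) to guarantee that the search terminates.

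Fix a node $\sigma$ of length $k$ and suppose $\sigma \models_{App}$ ``$T$ is a decidable binary tree with no infinite path''. By Lemma 1, the set $T$ (identified with a real in $V$) is Turing computable in $X^{k-1} = R_{k-1}$. I claim that some level $n$ satisfies $|T \cap \{0,1\}^n| \leq 2^{n-1}$. Suppose not; then strictly more than half of $\{0,1\}^n$ lies in $T$ for every $n$, so the set of branches of $T$ has measure $\geq 1/2 > 0$ in $V$. Yu-Simpson (i) then produces a path $P$ through $T$ computable in $X^k$. Pass to any successor $\sigma' = \sigma^\frown i$ of length $k+1$: here $X^k = R_k = R_{|\sigma'|-1}$ is accessible to computations with oracle $R$, so $P$ is decidable at $\sigma'$; since $P$ is a genuine infinite path through $T$ in $V$, this yields a realizer at $\sigma'$ of ``there is an infinite path through $T$'' (on input $m$, enumerate the finitely many sequences of length $m$ and return the unique one lying in $P$). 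But the realizer of ``$T$ has no infinite path'' inherits upward from $\sigma$ to $\sigma'$, producing a realizer of $\bot$ there, contradicting soundness.

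With the claim in hand, a uniform index $r^*$ realizing W-D-FAN is easy to write down: given realizers $\alpha, \beta$ of ``$T$ decidable'' and ``$T$ has no infinite path'' respectively, $r^*$ uses $\alpha$ to search for the least $n$ with $|T \cap \{0,1\}^n| \leq 2^{n-1}$ and outputs $n$ paired with the realizer for the decidable statement ``at least half of $\{0,1\}^n$ is not in $T$'' assembled by running $\alpha$ on each of the $2^n$ sequences of length $n$. The previous paragraph guarantees termination at every node, so $r^*$ works uniformly; in particular $\langle \rangle \models_{App}$ W-D-FAN.

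The main subtlety is the termination argument: one has to lift a hypothetical $X^k$-computable path through $T$ to an infinite path realized at a child of $\sigma$, in order to contradict the inherited ``no path'' realizer. The remaining ingredients --- the decidability computations packaged by $\alpha$, the uniform definition of $r^*$, and the Kripke inheritance of $\beta$ --- are routine.
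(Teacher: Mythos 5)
Your proof is correct and takes essentially the same approach as the paper: use Lemma~1 to locate the tree's Turing complexity, apply Yu--Simpson's property (i) to the positive-measure case to obtain a path that becomes decidable at a deeper Kripke node, contradict the inherited ``no path'' realizer, and then search $T$ level by level for the witness. The only small slip is at $\sigma=\langle\rangle$, where $k=0$ makes ``$X^{k-1}$'' undefined: there $T$ is simply computable, Yu--Simpson yields a path in $X^1$, and that path only becomes decidable two levels down rather than one --- harmless, since you may pass to any sufficiently deep descendant.
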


\begin {proof}
At any node, a decidable tree $T$ is computable in some $X^n$. If
in V the measure of the branches through $T$ were positive, then
there would be a branch computable through $X^{n+1}$. So no node
could force that there are no branches through $T$. To compute a
level at which half the nodes are not in $T$, just go through $T$
level by level until this is found.
\end {proof}

\begin {lemma}
$\langle \rangle \models_{App} \neg$D-FAN
\end {lemma}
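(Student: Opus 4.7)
The plan is to exhibit the (standard, primitive-recursive) Kleene tree $K$ as a decidable, infinite binary tree that is forced to have no infinite branch. Identifying $K$ with $\check{K}$, at every node $\sigma$ the tree $K$ agrees with a fixed decidable subset of $2^{<\omega}$ in V, so $K$ is forced everywhere to be a decidable binary tree. Likewise the fact that $K$ has arbitrarily long elements is a bounded arithmetical statement true in V, and hence forced everywhere, so $\langle\rangle\models_{App}``K$ is not finite.''

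The crux is to show that no node can force ``$K$ has an infinite branch.'' I would argue contrapositively: suppose $\sigma$, of length $n$, satisfies $\sigma\models_{App}``X$ is an infinite branch through $K$.'' By the previous lemma, $\sigma\models_{App}``X$ is decidable,'' and by Lemma 1, the real $X$ is Turing computable in $R_{n-1}=X^{n-1}$. Since membership in $\check{K}$ is absolute between V and M[App] for standard finite sequences, the resulting $X\in 2^\omega$ (as an element of V) is a genuine infinite path through the Kleene tree and is computable in $X^{n-1}$. This contradicts the Yu--Simpson property (ii), which says that no path through the Kleene tree is computable in any $X^m$. Hence no $\sigma$ forces ``$K$ has an infinite branch,'' so $\langle\rangle\models_{App}``K$ has no infinite branch.'' Combining the three facts yields $\langle\rangle\models_{App}\neg$D-FAN, as the Kleene tree witnesses the negated implication internally.

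The main subtlety, rather than a genuine obstacle, is the bookkeeping that lets one pass freely between (a) the putative branch $X$ as an object of M[App], (b) its realizer-level presentation as a function computable in $R$, and (c) its external version as a real in V that is a branch through $K$. This is exactly what Lemmas 1 and 2 were set up to deliver: Lemma 2 guarantees that any internal branch is decidable, and Lemma 1 converts internal decidability at $\sigma$ into external computability in $R_{|\sigma|-1}=X^{|\sigma|-1}$, at which point the Yu--Simpson counterexample applies directly.
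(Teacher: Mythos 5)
Your proof is correct and takes the same approach as the paper, which offers only the one-line observation that the Kleene tree provides a counter-example; your argument supplies exactly the details that remark compresses (decidability and infinitude of $K$ forced everywhere, and no node forcing an internal branch because Lemmas~1 and~2 would turn it into an $X^{n}$-computable genuine path through $K$, contradicting Yu--Simpson~(ii)).
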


\begin {proof}
The Kleene tree provides a counter-example.
\end {proof}

While we expect that even full W-FAN does not imply D-FAN, this
model does not satisfy W-FAN. To see that, recall that any path
through the binary tree is decidable, hence computable in some
$X^n$. There are only countably many such paths. It is easy in V
to construct a tree avoiding those countably many paths with
measure (of the paths) being as close to 1 as you'd like. The
internalization of such a tree in M[App] will not be decidable,
but will be internally a tree with no paths.

\section {FAN and WKL}

The distinction between FAN and WKL is a strange case. Their
relation is that WKL implies FAN, but not the converse. With some
exaggeration, it seems as though everyone knows that but no one
has proven it. \footnote{Thanks are due here to Hannes Diener for
first pointing this out to us and Thomas Streicher for useful
discussion.} At the very introduction of non-classical logic,
Brouwer himself must have realized this distinction, as he made a
conscious choice which variant of this class of principles he
accepted. Moreover, while he accepted FAN (having proven it from
Bar Induction), it is easy to see that WKL implies LLPO, which
Brouwer rejected. So while Brouwer did not provide what we would
today consider a model of FAN + $\neg$WKL, we would like to honor
him in the style of the Pythagoreans by attributing this result to
him, whatever may actually have been going through his mind.

Such models have since been provided, for instance by Kleene,
using his functional realizability K$_2$ \cite{KV,R}. However, in
neither of those sources is it mentioned that WKL fails. In \cite
{jB}, both FAN and WKL are analyzed into constituent principles,
it is shown that WKL's components imply the corresponding FAN
components, and it is nowhere stated that the converse does not
hold. In \cite {BI}, equivalents are given for what is there
called FAN and WKL, although their FAN is actually D-FAN, and it
is at least asked how much stronger WKL is than FAN. The one proof
we have been able to find of some fan theorem not implying WKL is
in \cite {M}, where once again the fan principle used is D-FAN.
For what it's worth, that argument, like ours, uses relative
realizability \cite {BvO}, albeit with K$_2$ realizability.

Below we give a full proof that FAN does not imply WKL. We would
be interested in hearing of other extant proofs of such, and would
find it amusing if there were none. What might be new here, if
anything, is not the result itself, but rather the methodological
point that this model is based on K$_1$. That is, while K$_1$ is
usually used to falsify even D-FAN, its variant below validates
full FAN.

\begin {theorem} (Brouwer) FAN does not imply WKL.
\end {theorem}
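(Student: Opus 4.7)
The plan is to apply the main construction of Section~2 to a sequence $(R_n)_{n \in \omega}$ of reals of strictly increasing Turing degree satisfying the following path-completing property: every infinite binary tree computable in $R_{n-1}$ has an infinite path computable in $R_n$. Such a sequence can be built inductively by a relativization of the Jockusch--Soare low basis theorem, taking $R_n$ to contain a path through every $R_{n-1}$-computable infinite binary tree.

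To see that $\langle \rangle \models_{App} \neg$WKL, I would show that no node $\sigma$ forces WKL. A realizer of WKL at $\sigma$ of length $n$ would have to be an $R_{n-1}$-computable function taking indices of decidable infinite binary trees to indices of infinite paths, both relative to the oracle $R_{n-1}$ available at $\sigma$. But the Kleene tree relativized to $R_{n-1}$ is an $R_{n-1}$-computable infinite binary tree with no $R_{n-1}$-computable infinite path, ruling out any such uniform realizer.

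For FAN, I would mirror the proof of W-D-FAN. Let $T$ be an internal tree at a node $\sigma$ of length $n$ with $\sigma \models_{App}$ ``$T$ has no infinite path.'' In the decidable case, $T$ is $R_{n-1}$-computable in V; if $T$ were classically infinite, the path-completing property would give a path in $R_n$, which would be realized at a length-$(n+1)$ extension of $\sigma$ and contradict the hypothesis. Hence $T$ is classically bounded, and the bound can be read off effectively by going through $T$ level by level. For a general internal tree, one uses the realizer of the hypothesis---read as a bar realizer for $2^{<\omega} \setminus T$---to reduce to the decidable case.

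The main obstacle is the non-decidable case of FAN. The classical shadow of an internal tree $T$ varies with the Kripke node and is not in general controlled by a single classical oracle. The reduction to the decidable case must use the bar realizer to produce sufficient effective data controlling $T$'s growth along Kripke chains, ensuring that the path-completing property of the $R_n$'s applies to the resulting effective subtree and that any path it delivers is in fact internally realized as a path through $T$.
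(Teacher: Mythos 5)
Your choice of oracle ladder (an iterated low-basis construction so that every $R_{n-1}$-computable infinite binary tree has an $R_n$-computable path) is essentially the same as the paper's, which packages the same idea by taking the reals of a countable $\omega$-standard model of WKL$_0$. Your argument that WKL fails---a realizer at a length-$n$ node would have to uniformly compute paths from trees using only the oracle $R_{n-1}$, defeated by the $R_{n-1}$-relativized Kleene tree---is also the paper's argument. So the setup and the negative half are fine.

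The gap is exactly where you flag it: the non-decidable case of FAN, which is the whole point of the theorem (the decidable case would only give D-FAN, which K$_2$ realizability already separates from WKL). You write that ``one uses the realizer of the hypothesis \ldots\ to reduce to the decidable case'' and that it ``must \ldots\ produce sufficient effective data controlling $T$'s growth,'' but you never say how, and that step is the actual content. The paper does \emph{not} reduce the given bar $B$ to a decidable bar; instead, from the bar-realizer $r$ it \emph{constructs by recursion} a decidable set $C\subseteq B$: at stage $n$, for each length-$n$ sequence $\bar b$ not already trapped by $C$, it feeds the eventually-zero path $P_{\bar b}$ through $\bar b$ into $r$, gets back a node $b^+$ of $B$ on that path, and records it in $C$ (extending to length $n$ if $b^+$ is short). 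Termination is then proved by a use argument: if the recursion never produced a uniform bound, the complement of $C$ would be an infinite decidable tree computable from $r$, hence at a child node it would have a path $P$; but the computation $r(P)$ queries only a finite initial segment $c$ of $P$ and returns some $b\in B$ on $P$, so at stage $\max(|b|,|c|)$ the recursion would have run $r$ on the eventually-zero path agreeing with $P$ that far, obtained the same $b$ by continuity of the computation, and put an initial segment of $P$ into $C$ --- contradicting $P$ avoiding $C$. This construction-plus-use-argument is the missing idea; without it, your proposal only covers what amounts to D-FAN, not FAN.
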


\begin {proof}
Take a countable $\omega$-standard model of WKL$_0$ (see
\cite{SS}, ch. VIII). There is a sequence $X^n$ of reals of
increasing Turing degree such that the reals in this model are
exactly those computable in some $X^n$. This induces a model
M[App] as in section 2 above (with $R_n$ being set to $X^n$).

To see that WKL fails, suppose to the contrary $\sigma
\models_{App} ``n \Vdash_r WKL"$. So if $\sigma \models_{App} ``e
\Vdash_r T$ is an infinite binary tree" then, at $\sigma, \;
\{n\}(e)$ must compute a path through $T$. But a path through a
tree is not computable in the tree, as is standard, by considering
the Kleene tree. This shows moreover that WKL for decidable trees
fails.

To show FAN, suppose at some node $\sigma$ in the Kripke partial
order $r$ realizes that $B$ is a bar. We must show how to compute
a uniform bound on $B$. To do this, we will build a decidable
subset $C$ of $B$. Inductively at stage $n$, suppose we have
decided $C$ on all binary sequences of length less than $n$.
Consider each binary sequence $\bar{b}$ of length $n$ (except if
$\bar{b}$ extends something in $C$ of shorter length, in which
case what happens with $\bar{b}$ just doesn't matter anymore).
Consider the path $P_{\bar{b}}$ which passes through $\bar{b}$ and
is always 0 after that. Applying $r$ to $P_{\bar{b}}$ produces a
sequence $b^+$ in $B$ on $P_{\bar{b}}$. If $b^+$ has length at
most $n$, include in $C$ all extensions of $b^+$ of length $n$,
else just include $b^+$ in $C$. After doing this for all $\bar{b}$
of length $n$, anything of length $n$ not put into $C$ is out of
$C$. This procedure terminates only when we have a uniform bound
on $C$, hence on $B$. If this never terminates, we have a
decidable, infinite set of binary sequences not in $C$ computable
from $r$. Hence at any child $\tau$ of $\sigma$ there will be an
infinite path $P$ avoiding $C$. Applying $r$ to $P$ produces an
initial segment of $P$ in $B$, say $b$. This computation itself
used only an initial segment of $P$, say $c$. Letting $n$ be the
maximum length of $b$ and $c$, at stage $n$ no initial segment of
either has been put into $C$, by the choice of $P$. So the
procedure would consider the path through $b$ and $c$ which is all
0s afterwards. This would then put the longer of $b$ and $c$ into
$C$, contradicting the choice of $P$. So this procedure must
terminate, producing a bound for $B$.

\end {proof}

\section {Questions}

1. The second construction was developed for an entirely different
purpose.

One way of stating FAN is that every bar is uniform. Weaker
versions of FAN can be developed by restricting the bars to which
the assertion applies. For instance, Decidable FAN, D-FAN, states
that for every decidable set B (i.e. for all $b$ either $b \in B$
or $b \not \in B$), if B is a bar then B is uniform.
Constructively, decidability is a very strong property; in fact,
it is the strongest hypothesis on a bar yet to be identified.
D-FAN is trivially implied by FAN; it has long been known that
D-FAN is not provable in IZF (via the Kleene tree, described in
any standard reference, such as \cite{Bees,TvD}; see \cite{LD} for
a different proof). A somewhat milder restriction on a bar B is
that it be the intersection of countably many decidable sets; that
is, B is $\Pi^0_1$ definable. Between decidable and $\Pi^0_1$ bars
are c-bars: if there is a decidable set B$'$ such that $b \in B$
iff for every $c$ extending $b \; c \in B'$, then $B$ is called a
c-set, and if it's a bar to boot then it's called a c-bar. Often
this definition seems at first unnatural and rather technical. All
that matters at the moment is that this is a weaker condition than
decidability: every decidable bar is a c-bar. c-FAN is the
assertion that every c-bar is uniform. Such principles occur
naturally in reverse constructive mathematics \cite{JR,jB06,DL},
and are all inequivalent \cite{LD}.

The first proof that D-FAN does not imply c-FAN, by Josef Berger
\cite{jB09}, went as follows. Classically, for X any collection of
bars, X-FAN and X-WKL are equivalent (as contrapositives).
Furthermore, the Turing jump of a real R can be coded into a tree
computable in R, so that c-WKL implies that jumps always exist.
Hence over RCA$_0$ c-WKL implies ACA. D-FAN and WKL (which, in the
setting of limited comprehension, is just D-WKL) are equivalent.
So if D-FAN implied c-FAN, constructively or classically, then WKL
would imply ACA, which is known not to be the case \cite{SS}.

A limitation of this argument is that it works over a very weak
base theory. It leaves open the question of whether D-FAN implies
c-FAN over IZF. While this has been settled \cite{LD}, a question
of method still remains open. Could Berger's argument be re-cast
to provide an independence results over IZF? The obvious place to
look seemed to be a model of WKL + $\neg$ACA, using the functions
there, which necessarily have no largest Turing degree, as
realizers. Our analysis of such a model did not achieve that goal.
Is there another way of turning such a model into a separation of
D- and c-FAN?

More generally, could there be any realizability model separating
D- and c-FAN? All of the realizability models we know about either
falsify D-FAN or satisfy full FAN. Perhaps that's because of the
difficulty of realizing that something is a bar. That is, to
falsify any version of the FAN, one might well want to provide a
counter-example, which would be a non-uniform bar. If a bar is not
uniform, realizing the non-uniformity would typically be trivial,
as nothing could realize that it is uniform, which suffices.
Realizing that a set is a bar is different: given a binary path,
you'd have to compute a place on that path and realize that that
location is in the alleged bar. If this set is decidable, that's
easy: continue along the path, checking each node on the way,
until you're in it. If the set cannot be assumed decidable, it is
unclear to us how to realize that it's a bar. This is something we
would like to see: a way of realizing a non-decidable set being a
bar.

2. The differences among D-FAN, c-FAN, $\Pi^0_1$-FAN, and FAN have
to do with the hypothesis; they apply to different kinds of bars.
In contrast, the difference between FAN and W-FAN has to do with
the conclusion, with whether the bar is uniform or half-uniform,
to coin a phrase. So these variants can be mixed and matched.
There are D-FAN, c-FAN, $\Pi^0_1$-FAN, FAN, and also W-D-FAN,
W-c-FAN, W-$\Pi^0_1$-FAN, and W-FAN. Clearly any version of FAN
implies its weak cousin. Other than that, we conjecture there is
complete independence between the variants of FAN and the variants
of Weak FAN. This is, we conjecture W-FAN does not imply D-FAN,
and conjoined with D-FAN does not imply c-FAN, and so on.
Furthermore, we expect that D-FAN, while of course implying
W-D-FAN, does not imply W-c-FAN, and c-FAN does not imply W-FAN,
and so on for other variants that might appear.

\begin {thebibliography} {99}
\bibitem  {AB}
    Awodey, S., Birkedal, L.:
    Elementary axioms for local maps of toposes.
    Journal of Pure and Applied Algebra
    {\bf 177(3)} (2003)
    215-230

\bibitem{B}
        Bauer, A.:
       The Realizability approach to computable analysis and
       topology.
        Ph.D. Thesis (2000)

\bibitem{Bees} Beeson, M.:
    Foundations of Constructive Mathematics, Springer-Verlag
    (1985)

\bibitem{jB06}Berger, J.:
       The Logical strength of the uniform continuity theorem.
   In Logical Approaches to Computational Barriers (eds.
      Beckmann, A.,
      Berger, U.,
      L{\"{o}}we, B., and
      Tucker, J.~V.),
      Lecture Notes in Computer Sciences,
   Springer Berlin / Heidelberg,
       (2006) 35-39

\bibitem{jB}Berger, J.:
    A decomposition of Brouwer's fan theorem.
    Journal of Logic and Analysis,
    {\bf 1(6)} (2009)
    1-8

\bibitem{jB09}Berger, J.:
       A separation result for varieties of Brouwer's fan theorem.
       In Proceedings of the 10th Asian Logic Conference (alc 10),
    Kobe University in Kobe, hyogo, Japan, September 1-6, 2008
      (eds. Arai et al.),
    World Scientific, (2010)
       85-92

\bibitem{BI}Berger, J., Ishihara, H.:
    Brouwer's fan theorem and unique existence in constructive
    analysis.
    Mathematical Logic Quarterly,
    {\bf 51(4)} (2005) 360-364,
    doi:10.1002/malq.200410038

\bibitem{BvO}  Birkedal, L., van Oosten, J.:
    Relative and modified relative realizability.
    Annals of Pure and Applied Logic
    {\bf 118(1-2)} (2002) 115-132

\bibitem{dB85}  Bishop, E., Bridges, D.:
    Constructive Analysis, Springer-Verlag (1985)

\bibitem{dJ} de Jongh, D.H.J.:
       The Maximality of the intuitionistic predicate calculus with respect to Heyting's
       Arithmetic.
        typed manuscript (1969)

\bibitem{hD08b} Diener, H.:
       Compactness under constructive scrutiny.
        Ph.D. Thesis (2008)

\bibitem{DL} Diener, H., Loeb, I.:
    Sequences of real functions on [0, 1] in constructive
    reverse mathematics.
    Annals of Pure and Applied Logic {\bf 157(1)} (2009)
    50-61

\bibitem{mF79} Fourman, M., Hyland, J.M.E.: Sheaf models for
analysis. In Applications of Sheaves (eds. Fourman, M., Mulvey,
C., Scott, D.), Lecture Notes in Mathematics {\bf 753}
    Springer Berlin / Heidelberg,
    (1979) 280-301, http://dx.doi.org/10.1007/BFb0061823

\bibitem{G} N.D. Goodman, N.D.: Relativized realizability in intuitionistic arithmetic of all finite
types.
    Journal of Symbolic Logic {\bf 43} (1978) 23-44

\bibitem{JR} Julian, W., Richman, F.: A Uniformly continuous function on [0,1]  that is everywhere
    different from its infimum.
    Pacific Journal of Mathematics (1984) {\bf 111(2)} 333-340

\bibitem{KV} Kleene, S.C., Vesley, R.E.: The Foundations of Intuitionistic Mathematics,
North-Holland (1965)

\bibitem{L} Lubarsky, R.: Independence results around Constructive
ZF.
    Annals of Pure and Applied Logic {\bf 132(2-3)} (2005) 209-225

\bibitem{LD} Lubarsky, R., Diener, H.:
    Separating the Fan Theorem and Its Weakenings.
    In Proceedings of LFCS '13 (eds. Artemov, S.N., Nerode, A.), Lecture Notes in Computer
    Science {\bf 7734}, Springer (2013) 280-295

\bibitem{M} Moschovakis, J.: Another Unique Weak K\"onig's Lemma.
In Logic, Construction, Computation (eds. Berger, U., Diener, H.,
Schuster, P., Seisenberger, M.), Ontos (2013), to appear

\bibitem{N} Nemoto, T.: Weak weak K\"onig's Lemma in constructive reverse
mathematics.
        In Proceedings of the 10th Asian Logic Conference (alc 10),
    Kobe University in Kobe, hyogo, Japan, September 1-6, 2008
      (eds. Arai et al.),
    World Scientific (2010)
       263-270

\bibitem{R} Rathjen, M.: Constructive Set Theory and Brouwerian
Principles.
    Journal of Universal Computer Science
     {\bf 11} (2005) 2008-2033

\bibitem{SS} Simpson, S.: Subsystems of Second Order Arithmetic.
    ASL/Cambridge University Press (2009)

\bibitem{TvD} Troelstra, A.S., van Dalen, D.: Constructivism in Mathematics, Vol.
1.    North Holland  (1988)

\bibitem{vO'} van Oosten, J.:
    A Semantical proof of De Jongh's theorem.
    Archive for Mathematical Logic
    {\bf 31} (1991) 105-114

\bibitem{vO}van Oosten, J.:
    Realizability: An Introduction to its Categorical Side.
    Elsevier (2008)

\bibitem{YS} Yu, X., Simpson, S.:
     Measure theory and weak K\"onig's lemma.
     Archive for Mathematical Logic
     {\bf 30} (1990) 171-180

\end {thebibliography}
\end{document}